\documentclass[12pt,reqno]{amsart}
\usepackage{amssymb}
\usepackage{amsmath, mathtools}

\usepackage{amsthm}

\usepackage{amscd}

\newcommand{\RNum}[1]{\uppercase\expandafter{\romannumeral #1\relax}}

\usepackage[T2A]{fontenc}
\usepackage[utf8]{inputenc}
\usepackage[english]{babel}

\input{int.def} 

\usepackage[sort]{cite}
\usepackage{tikz-cd}
\usetikzlibrary{cd}
\usepackage{dirtytalk}
\usepackage[linktoc=page, colorlinks, linkcolor=blue, citecolor=blue]{hyperref}

\usepackage{xcolor}
\usepackage{centernot}

\usepackage{enumitem}

\usepackage{pgfplots}
\usepackage{multicol}

\pgfplotsset{compat=1.17}

\makeatletter
\renewenvironment{proof}[1][\proofname]{%
  \par\vspace{\topsep}%
  \normalfont\topsep6\p@\@plus6\p@\relax
  \trivlist
  \item[\hskip\labelsep\itshape #1\@addpunct{.}]\ignorespaces
}{%
  \endtrivlist
}
\makeatother

\numberwithin{equation}{section}

\DeclarePairedDelimiter \abs{\lvert}{\rvert} 
\DeclarePairedDelimiter \norm{\lVert}{\rVert}
\DeclarePairedDelimiterX \ip[2]{\langle}{\rangle}{#1,#2}
\DeclarePairedDelimiterXPP \Prob[1]{\mathbb{P}}\{\}{}{\newcommand\given{\nonscript\:\delimsize\vert\nonscript\:\mathopen{}} #1} 
\DeclarePairedDelimiterXPP \Probevent[1]{\mathbb{P}}(){}{\newcommand\given{\nonscript\:\delimsize\vert\nonscript\:\mathopen{}}#1} 

\DeclareMathOperator{\disc}{disc}
\DeclareMathOperator{\E}{\mathbb{E}}

\newcommand{\one}{\mathbf{1}}

\def \R {\mathbb{R}}
\def \e {\varepsilon}

\parindent = 0pt

\usepackage[mathcal]{euscript}

\usepackage{titlesec}
\titleformat{\section}[runin]{\bfseries}{\thesection.}{3pt}{}[.]

\baselineskip =15.0pt plus 2.5pt
\usepackage{geometry}
\newgeometry{vmargin={25mm}, hmargin={22mm,22mm}, footskip=10mm}   

\begin{document}

\title[
Thinning to improve two-sample discrepancy
]{Thinning to improve two-sample discrepancy}

\author{Gleb Smirnov}
\address{
Mathematical Sciences Institute, 
Australian National University, Canberra, Australia}
\email{gleb.smirnov@anu.edu.au}

\author{Roman Vershynin}
\address{Department of Mathematics, University of California, Irvine, US}
\curraddr{}
\email{rvershyn@uci.edu}

\thanks{R.V.~is partially supported by NSF Grant DMS 1954233 and NSF+Simons Research Collaborations on the Mathematical and Scientific Foundations of Deep Learning}


\begin{abstract}
    The discrepancy between two independent samples \(X_1,\dots,X_n\) and \(Y_1,\dots,Y_n\) drawn from the same distribution on $\R^d$ typically has order \(O(\sqrt{n})\) even in one dimension. We give a simple online algorithm that reduces the discrepancy to \(O(\log^{2d} n)\) by discarding a small fraction of the points.
\end{abstract}

\maketitle

\setcounter{section}{0}

Random sampling inevitably brings errors. Classical work in statistics and probability has led to a thorough understanding of sampling errors. Much less is known about how to reduce them with minimal intervention. In this paper, we address the following version of the problem -- how to align two independent random samples with each other by discarding a few points?

\section{Main results} \label{intro}
Let \( X_1, \dots, X_n \) and \( Y_1, \dots, Y_n \) be two independent samples, each consisting of \(n\) independent observations drawn from the same Borel probability distribution on $\R^d$. For a Borel set $B \subset \R^d$, we count how many points from each sample fall into $B$, and define its discrepancy as:
\[
\operatorname{disc}(B) 
\coloneqq \abs[\big]{\#\{i:\;  X_i \in B \} - \#\{i:\; Y_i \in B \}}.
\]
Taking the supremum over all axis-aligned, half-infinite\footnote{Our main results remain valid if the supremum is taken over all finite boxes \(B = [a_1,b_1] \times \cdots \times [a_d,b_d]\).} boxes \(B = (-\infty,b_1] \times \cdots \times (-\infty,b_d]\), define:
\begin{equation}\label{Dnn}
    D_{n,n} \coloneqq \sup_B \operatorname{disc}(B).
\end{equation}
In dimension $d = 1$, the normalized discrepancy \(n^{-1} D_{n,n}\) recovers the Kolmogorov-Smirnov distance. Classical VC theory \cite[Theorem 8.3.23]{vershynin2018high} gives:
$$
\E D_{n,n} = O_d \big(\sqrt{n}\big).
$$
This order is optimal for any nontrivial distribution,  because we have $\E \operatorname{disc}(B) \asymp \sqrt{n}$ for any set $B$ whose measure is bounded away from $0$ and $1$. However, the discrepancy can be drastically reduced by discarding a small fraction of the samples. This is our main result.

\begin{theorem}[Two-sample discrepancy] \label{main}
    Fix \(T\) so that \(1 \leq T \leq \sqrt{n}\). Let \( X_1, \dots, X_n \) and \( Y_1, \dots, Y_n \) be i.i.d. samples from the same Borel probability distribution on $\R^d$. There is a randomized online algorithm that discards, on average, at most $Cn/T$ of the \( X_i \)'s and \( Y_i \)'s, and achieves
    $$
    \E D_{n,n} \le T \log_2^{2d} n,
    $$
    where $C$ is an absolute constant. Expectations are over both samples and the algorithm.    
\end{theorem}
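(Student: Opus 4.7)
The plan is as follows. By applying the coordinatewise quantile transformation (the inverse CDF in each coordinate, with a small randomization to resolve atoms), we may assume without loss of generality that the distribution has uniform marginals on $[0,1]$; this preserves both discrepancies and the class of axis-aligned half-infinite boxes, which now take the form $B = [0,c_1]\times\cdots\times[0,c_d]$ with $c_i \in [0,1]$. Set $M := \lceil \log_2 n \rceil$ and fix the truncation threshold $T' := T \cdot M^d$. Introduce the dyadic family
\[
\scr{D} = \Bigl\{\,Q_{\mathbf{k},\mathbf{j}} = \prod_{i=1}^d \bigl[(j_i-1)2^{-k_i},\, j_i 2^{-k_i}\bigr) \;:\; \mathbf{k} \in \{0,\ldots,M\}^d,\ 1 \le j_i \le 2^{k_i}\Bigr\}.
\]

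The algorithm maintains, for every $Q \in \scr{D}$, a signed counter $c(Q) = (\text{\#kept } X\text{-points in }Q) - (\text{\#kept } Y\text{-points in }Q)$, initially $0$. It processes the $2n$ points online in a uniformly random order (part of the algorithm's internal randomization); upon arrival of a point $z$ with sign $\varepsilon \in \{\pm 1\}$, the algorithm keeps $z$ and updates $c(Q) \leftarrow c(Q) + \varepsilon$ for every dyadic $Q \ni z$ provided $|c(Q) + \varepsilon| \le T'$ holds for every such $Q$, and otherwise discards $z$. By construction, the invariant $|c(Q)| \le T'$ holds for every $Q \in \scr{D}$ at all times. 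The discrepancy bound is then immediate: each corner box $B = [0,c_1]\times\cdots\times[0,c_d]$ decomposes (via the binary expansion of the $c_i$ up to depth $M$, with a negligible tail of at most one sub-unit box per axis) into a disjoint union of at most $M^d$ boxes from $\scr{D}$, so
\[
\operatorname{disc}(B) \;\le\; \sum_{Q \text{ in decomposition}} |c(Q)| \;\le\; M^d \cdot T' \;=\; T \cdot M^{2d} \;\le\; T \log_2^{2d} n.
\]

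For the discard count, let $R_Q$ denote the number of arrivals at $Q$ that would have pushed $|c(Q)|$ above $T'$ if kept. A discard requires at least one such would-be violation among the dyadic ancestors of the arriving point, so $\#\text{discards} \le \sum_{Q \in \scr{D}} R_Q$. The key per-box estimate is that, conditionally on the number $M_Q$ of points visiting $Q$ and their order of arrival, their signs form a uniformly random $\pm 1$ arrangement (because the $X_i$'s and $Y_i$'s are i.i.d.\ from the same law), so $c(Q)$ performs an unbiased $\pm 1$ random walk truncated at $\pm T'$; standard local-time estimates for such reflected walks give $\E R_Q \le C \cdot \E M_Q / T'$. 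Since each of the $2n$ points belongs to exactly one box per multi-level $\mathbf{k} \in \{0,\ldots,M\}^d$, we have $\sum_Q \E M_Q = 2n \cdot (M+1)^d$, and therefore
\[
\E[\#\text{discards}] \;\le\; \frac{C}{T'} \sum_{Q \in \scr{D}} \E M_Q \;=\; \frac{2Cn\,(M+1)^d}{T \cdot M^d} \;=\; O(n/T),
\]
as claimed. The main obstacle is making the per-box reflected-walk estimate rigorous in the presence of the online coupling across dyadic levels: a discard triggered at a coarse $Q$ also suppresses updates at every finer dyadic box containing the same point, so the counter trajectories at different levels are intertwined. One expects this coupling to only help (reflections at coarser $Q$'s attenuate the walks at finer $Q$'s), and this should be made rigorous by comparing with the single-box oracle algorithm via a stochastic domination or martingale argument, combined with a standard analysis of the boundary local time of a simple random walk reflected at $\pm T'$.
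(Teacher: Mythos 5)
Your high-level architecture matches the paper's: reduce to uniform marginals, maintain thresholded counters over a dyadic family, decompose corner boxes into roughly $(\log n)^d$ dyadic pieces to transfer the per-box bound, and charge discards to boundary events. But there are two genuine gaps, one of which is the crux of the whole argument.

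The central gap is exactly the one you flag as ``the main obstacle.'' You want $\E R_Q \lesssim \E M_Q/T'$ via a reflected-random-walk local-time bound, but the trajectory $c(Q)$ is \emph{not} a reflected random walk with i.i.d.\ (or even exchangeable) increments: updates at $Q$ are skipped whenever some other dyadic box $Q'$ containing the same point hits its threshold, and the set of skipped steps depends on the joint state of all counters. You assert that ``one expects this coupling to only help'' and that a stochastic-domination or martingale comparison should rigorize it, but you do not give such an argument, and it is not obvious one exists: a state-dependent thinning of a walk can, in principle, bias it toward the boundary (imagine skips happening precisely when the walk was about to move inward). The paper avoids this issue entirely by a different device. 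It packages all dyadic counters into one vector $w_k \in \R^{\dim}$, and — crucially — initializes $w_0$ \emph{uniformly at random} in the cube $K = [-T'/2, T'/2]^{\dim}$. Because the Rademacher signs make the transition kernel of $w_k$ symmetric (a Metropolis chain rejecting exits from $K$), the uniform distribution on $K$ is stationary, so each $w_{k-1}$ is marginally uniform on $K$ and independent of $\e_k$. The probability that the $k$-th point is discarded is then computed in one line by a volume calculation (Lemma 3 of the paper): $\Prob\{\norm{w_{k-1}+\e_k v_k}_\infty > T'/2\} \le \norm{v_k}_1/T'$. There is no local time, no reflected walk, and no cross-level coupling to control — the uniform initialization absorbs all of that. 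This is the idea your proposal is missing.

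A secondary gap: you call the remainder of the binary expansion of each $c_i$ at depth $M$ a ``negligible tail of at most one sub-unit box per axis.'' That tail is a slice of width $2^{-M}$, and its contribution to $\disc_k(B)$ is the number of sample points falling into a thin slice, which has mean $\Theta(1)$ but a Poissonian tail; its maximum over all $d\cdot 2^{M-1}$ slices is $\Theta(\log n / \log\log n)$, not $O(1)$. The paper handles this with a Chernoff bound and a union bound over slices (Steps 3--4), producing the additive $d\ln(dn) + O(d)$ term in Proposition 2. You need this estimate; it is subdominant to $T\log_2^{2d} n$ when $T\ge 1$, but it must be proved, not waved away.

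Two smaller issues. First, your processing in a ``uniformly random order'' of the $2n$ labeled points gives a uniformly random \emph{balanced} sign sequence, not i.i.d.\ Rademacher signs, so the per-box walk is a tied-down bridge rather than a free random walk; this changes constants in any local-time estimate. The paper instead flips a fair coin at each step to decide whether to take the next $X$- or $Y$-point, which gives genuinely i.i.d.\ Rademacher signs and stops when one sample is exhausted (discarding the $O(\sqrt{n})$ leftover points, absorbed into the budget). Second, note that with the paper's randomized initialization the deterministic discrepancy bound is $\norm{w_k - w_0}_\infty \le 2\cdot(T'/2) = T'$ rather than $T'/2$, which your initialize-at-$0$ version would get; this is a harmless factor of $2$, but worth tracking.
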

\smallskip%

The algorithm (described in \S\,\ref{proof}) does not need to know the distribution. It is online in the sense that it processes points one by one, with the location of each point revealed only when processed. The decision to keep or discard a point depends only on its value and past decisions, and is final. At each step, the algorithm randomly chooses whether to process the next point from the \(X\)- or \(Y\)-sample by tossing a fair \(\pm 1\) coin: if \(+1\), a point from \(X\); if \(-1\), from \(Y\). The procedure stops when one sample is exhausted; all remaining points from the other are discarded. On average, \(O(\sqrt{n})\) points are left unprocessed, which is absorbed into the \(O(n/T)\) discard budget.
\smallskip%

Let \(x_1, \ldots, x_k \in \R^d\), with \(k < 2n\), denote the sequence of processed points. For each \(i\), define $\e_i = 1$ if $x_i$ is from the $X$-sample and $\e_i = -1$ if $x_i$ is from the $Y$-sample. Then \(\varepsilon_1, \ldots, \varepsilon_k\) are i.i.d. Rademacher random variables. For any Borel set \(B \subset \R^d\), define the {\em sign discrepancy}:
\begin{equation}    \label{eq: sign discrepancy}
    \disc_k(B) 
    = \abs[\Big]{\sum_{i = 1}^{k} \e_i \one_{\{x_i \in B\}}}.
\end{equation}
This reduces Theorem~\ref{main} to the statement 
below.

\begin{proposition}[Sign discrepancy] \label{stream}
    Let \(m\) be an integer with \(m \le 2n\).
    Let $x_1, \dots, x_{m}$ be i.i.d. samples from a Borel probability distribution in $\R^d$, let $\e_1,\ldots,\e_{m}$ be independent Rademacher random variables, and let $T>0$. There is an online algorithm that discards, on average, at most $2n/T$ of the \( x_i \)'s, and achieves 
    $$
    \E \sup_{B, 1 \leq k \leq m} \disc_k(B) 
    \le T \log_2^{2d} n + d \ln(dn) + C_1 d,
    $$
    where the supremum is over all axis-aligned,  half-infinite boxes $B$ and all $k=1,\ldots,m$, and where $C_1$ is an absolute constant. The ``online'' means that, at time $i$, the algorithm decides whether to discard the term $x_i$ based on this and previously retained terms; the decision is never reversed in the future.
\end{proposition}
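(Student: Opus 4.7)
I would bound $\sup_{k,B}\disc_k(B)$ by decomposing each half-infinite box into a small number of dyadic cells and running a greedy online algorithm that clamps the signed count of every cell by a common threshold $M$. The discrepancy bound then becomes a triangle inequality along the decomposition, while the discard budget comes from a per-cell boundary-hitting estimate summed over all cells.

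\textbf{Reduction and dyadic cells.} The supremum ranges over axis-aligned half-infinite boxes and is invariant under coordinatewise monotone maps, so I first assume the distribution is uniform on $[0,1]^d$ by composing each coordinate with its marginal CDF. (The algorithm substitutes online empirical quantiles for the unknown CDFs; the resulting error contributes the small additive term $d\ln(dn)+C_1 d$ via a DKW-type bound applied coordinate by coordinate.) Set $L:=\lfloor\log_2 n\rfloor$ and introduce the product dyadic cells $C_{\bfell,\mbfk}=\prod_{j=1}^d[k_j 2^{-\ell_j},(k_j+1)2^{-\ell_j}]$, $\bfell\in\{0,\ldots,L\}^d$. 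Two elementary facts drive the argument: every point lies in exactly $(L+1)^d$ such cells, and every half-infinite box $B\cap[0,1]^d$ is a disjoint union of at most $L^d$ cells (one per level vector~$\bfell$).

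\textbf{Algorithm and discrepancy bound.} Fix $M:=T L^d$ and maintain, for each dyadic cell $C$, the running signed counter $\sigma_C^{(k)}=\sum_{i\le k,\;x_i\in C,\;\mathrm{kept}}\e_i$. When $(x_i,\e_i)$ arrives, keep $x_i$ iff $|\sigma_C^{(i-1)}+\e_i|\le M$ for every cell $C\ni x_i$; otherwise discard. By construction $|\sigma_C^{(k)}|\le M$ for every $C$ and every $k$, so summing along the dyadic decomposition of any $B$ gives
\[
\disc_k(B)\;\le\;\sum_{C\in\mathrm{decomp}(B)}|\sigma_C^{(k)}|\;\le\;L^d\cdot M\;=\;T\log_2^{2d}n
\]
deterministically for every $k$ and every $B$; this absorbs the supremum over $k$ for free.

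\textbf{Discard budget and main obstacle.} A point is discarded only if some cell $C\ni x_i$ has $|\sigma_C^{(i-1)}|=M$ and $\e_i=\mathrm{sign}(\sigma_C^{(i-1)})$; writing $D_C$ for the number of such ``boundary events'' at $C$, a union bound gives total discards $\le\sum_C D_C$. Since $\e_i$ is Rademacher independent of $\calf_{i-1}$, conditioning reduces $\E[D_C]$ to half of the expected boundary-occupation time of $\sigma_C$ at $\pm M$. The heart of the argument is the per-cell estimate $\E[D_C]\le\E[n_C]/M$, where $n_C$ is the number of stream points in $C$; summing over cells and using $\sum_C n_C= m(L+1)^d\le 2n(L+1)^d$ then gives $\E[\text{total discards}]=O(n/T)$, within budget. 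The main obstacle is precisely this per-cell boundary-time estimate in the presence of inter-cell coupling: a discard triggered by some other cell $C'\ne C$ also skips the update to $\sigma_C$ in a manner correlated with $\e_i$ and with $\sigma_C$'s current state, so $\sigma_C$ is not literally a reflected simple random walk. I would handle this by tracking the potential $\Phi_C=M^2-\sigma_C^2\ge0$ together with the quadratic-variation identity for $\sigma_C^2-Q_C$ (where $Q_C$ counts kept points in $C$): foreign skips neither increase $\sigma_C^2$ nor $Q_C$, so the single-cell supermartingale bound on boundary time survives coupling, and one recovers $\E[D_C]\le\E[n_C]/M$ up to absolute constants, delivering the stated discard budget.
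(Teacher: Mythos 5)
Your overall architecture (dyadic cells at $L\approx\log_2 n$ levels, one signed counter per cell clamped by a common threshold $M=TL^d$, discrepancy by triangle inequality along a dyadic decomposition, discard budget by a per-cell boundary estimate) matches the paper's, but there are two genuine gaps and a misdirected side claim.

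\textbf{Gap 1: the per-cell discard estimate is the heart of the matter and your fix does not go through.} You correctly flag the inter-cell coupling as the main obstacle, but the potential/quadratic-variation patch is not sound as sketched. The point is not merely that foreign skips leave $\sigma_C$ unchanged; it is that conditionally on the event ``keep'', the sign $\e_i$ is no longer symmetric (it is biased by which side of $K$ other cells are near), so the increment identity $\E[(\sigma_C+\e)^2-\sigma_C^2\mid\text{kept}]=1$ fails, and the supermartingale you want for $M^2-\sigma_C^2$ does not follow. The paper sidesteps reflected-walk analysis entirely with a different idea: it treats all the per-cell counters as a single vector $w$ in the cube $K=[-T'/2,T'/2]^{\dim}$, and it \emph{initializes $w_0$ uniformly at random in $K$}. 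Because the step kernel is symmetric, the uniform law on $K$ is stationary, so every $w_{k-1}$ is exactly uniform on $K$ and independent of $\e_k v_k$. Then a one-line union bound (Lemma~4 in the paper) gives $\Prob\{w_{k-1}+\e_k v_k\notin K\}\le \|v_k\|_1/T'$, and summing over $k$ with $\|v_k\|_1=L^d$ gives the discard budget directly. No boundary-occupation-time argument is needed, and no per-cell decoupling is required. Your version starts the counters at $0$, which destroys stationarity and leaves you with the hard reflected-walk problem you identified but did not solve.

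\textbf{Gap 2: general boxes are not disjoint unions of $\le L^d$ dyadic cells.} The claim ``every half-infinite box $B\cap[0,1]^d$ is a disjoint union of at most $L^d$ cells'' is false unless every coordinate $b_j$ is a dyadic rational at resolution $L$. For generic $b_j$ the decomposition never terminates. The paper handles this with two extra steps you omit: round $B$ up to a \emph{lattice} box $B'$ (all $b_j$ of the form $(j+1)/2^{L-1}$), which \emph{does} decompose into $\le L^d$ dyadic boxes, giving $\disc_k(B')\le TL^{2d}$ deterministically; then control $B'\setminus B$, which lies in a union of $d$ thin slices, and bound the number of sample points in any slice by Chernoff plus a union bound over the $d2^{L-1}$ slices. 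That slice bound is precisely where the additive $d\ln(dn)+C_1 d$ term comes from, and the combined discrepancy bound holds only with high probability, not deterministically as your sketch asserts.

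\textbf{Misattributed reduction.} You attribute the $d\ln(dn)+C_1 d$ term to a DKW error from online empirical quantiles. Besides being the wrong source (see Gap~2), the DKW route does not give a bound of this order: with $m\asymp n$ samples the coordinatewise CDF error is $\Theta(1/\sqrt n)$, so $\Theta(\sqrt n)$ points near each estimated cell boundary can be misassigned, which would swamp the target bound. The paper instead applies the probability-integral transform $F_k$ (randomized at atoms), which maps the discrepancy problem \emph{exactly} to the uniform-marginal case with no error term (Lemma~1 and Lemma~2 of the paper).

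\textbf{What each approach buys.} Your triangle-inequality step for lattice-type boxes and the choice $M=TL^d$ are correct and match the paper. The paper's random-initialization trick is the decisive simplification: it converts a hard online reflected-walk control problem into a static union bound, and it is the ingredient your proposal is missing.
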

Specializing to \(m = 2n\) recovers Theorem \ref{main}.

\begin{remark}[Time and memory cost]
    The proof shows that each point can be processed in time \(O(\log_2^d n)\), independent of the parameter $T$, yielding a total time complexity of \(O(n \log_2^d n)\). The algorithm also requires \(O(n \log_2^d n)\) memory.
\end{remark}
\smallskip%

For context, 
recall the notion of 
\emph{star discrepancy}. Let $\mu$ be a Borel probability measure on $\R^d$. 
Let $x_1, \ldots, x_n \in \R^d$. The star discrepancy of the point set $x_1,\ldots,x_n$ with 
respect to $\mu$ is defined as:
\[
D^*(x_1,\ldots,x_n) \coloneqq
\sup_{B} \left| \#\{i:\; x_i \in B\} - n\,\mu(B) \right|,
\]
where the supremum is taken over 
all boxes \(B = (-\infty,b_1] \times \cdots \times (-\infty,b_d]\).
\smallskip%

Constructing point sets with 
low discrepancy (star and two-sample) 
is a classical, well-studied problem; see the monographs \cite{Mat,Chaz,Beck-Chen}. Less is known about how to \emph{improve} the discrepancy of a given random sample by moving or discarding only a few points. Recent progress appears in 
\cite{DFGGR, B-S, CDP, BMMS, Sm-Versh}. 
\smallskip%

Closest to our setting is the work \cite{DFGGR}, which considers the star discrepancy. The authors provide an online algorithm for points sampled from the uniform distribution on \([0,1]^d\); their algorithm discards \(O(n/T)\) points and achieves star discrepancy \(O(T \log^{2d} n)\). Applied separately to the samples \(X\) and \(Y\), and combined via the triangle inequality, this yields a version of Theorem~\ref{main}. The proof in \cite{DFGGR} relies on the orthogonality of Haar wavelets and is thus limited to the uniform measure on the cube. Our approach avoids orthogonality and applies to arbitrary Borel distributions on \(\mathbb{R}^d\). 
\smallskip%

For results on general measures, we refer to \cite{Aist-B-Nik, Aist-Dick}, which construct low-discrepancy sequences without assumptions on the underlying distribution. However, these constructions do not rely on thinning random samples and are thus not directly applicable to our setting.
\smallskip%

Our work also relates to recent advances in online discrepancy minimization \cite{AL-Saw, Bans-Spencer, Bans-JSS, Bans-JMSS}, which consider assigning a stream of \(2n\) points in \([0,1]^d\) to two sets \(X\) and \(Y\) (without discards) to minimize discrepancy \eqref{Dnn}. A variant of our algorithm may offer another approach to that 
problem -- see Proposition \ref{balancing_l1} below. Conversely, it would be interesting to see if their techniques could sharpen Theorem~\ref{main}. 

\begin{remark}[Open question]
It remains open whether the bound \(O(\log^{2d} n)\) in Theorem~\ref{main} is optimal. It would be interesting to either improve this bound or establish a matching lower bound.
\end{remark}

\begin{remark}[Dyadic boxes]
Our proof yields a stronger statement when one restricts to dyadic boxes. 
Fix a resolution parameter \(L \ge 1\). 
A dyadic interval is an interval of the form:
\[
\Bigl[ \frac{j}{2^\ell}, \frac{j+1}{2^\ell} \Bigr],
\qquad
j \in \{0,1,\ldots,2^\ell-1\},\ \ell \in \{0,1,\ldots,L-1\},
\]
and a dyadic box is a product of \(d\) dyadic intervals (possibly with different
\(j\) and \(\ell\) in different coordinates). In Step~1 of the proof of
Proposition~\ref{stream}, after thinning, we obtain the deterministic bound:
\[
\sup_{B\ \text{dyadic},\, 1 \le k \le m} \disc_k(B) \le T L^d.
\]
In particular, the two-sample discrepancy process restricted to dyadic boxes is
uniformly bounded by \(T L^d\) at 
all times and hence is subgaussian. 
This control is weakened in the final step of the proof,
where one passes from dyadic boxes to all axis-aligned boxes.
\end{remark}

\section{Reduction to the case of uniform marginals}\label{reduction}

It suffices to prove Proposition~\ref{stream} for distributions whose \(d\) coordinate marginals are uniform on \([0,1]\). We now explain why this entails no loss of generality.
\smallskip

Let \(F_1, \ldots, F_d\) be the marginal cumulative distribution functions. Without loss of generality, we may assume that each \(F_i\) is strictly increasing. This follows by perturbation: consider a mixture of the original distribution with a Gaussian on \(\mathbb{R}^d\). More precisely, flip a biased coin with success probability \(\theta\); if success, sample from the original distribution; otherwise, sample from the Gaussian. Samples of \(m\) points from the original and perturbed distributions coincide with probability \(\theta^m\), and so do their sign discrepancies. As \(\theta \to 1\), the expected discrepancy of the perturbed sample converges to that of the original. We may thus sample from the mixture instead of the original distribution, and the  marginals \(F_i\) of the mixture are strictly increasing.
\smallskip

For a real-valued random variable $X$ with cumulative distribution function \(F\), define the (randomized) integral transform of $X$ as a random convex combination of $F(X)$ and $F(X^-)$:
    $$
    \hat{X} 
    \coloneqq UF(X) + (1-U) F(X^-),
    $$
    where \(U\) is an independent random variable uniformly distributed on \([0,1]\), and $F(x^-) = \lim_{y \uparrow x} F(y)$. Then \(\hat{X}\) is uniformly distributed on \([0,1]\). Applying this transform coordinate-wise to any probability distribution on \(\mathbb{R}^d\) yields one with uniform marginals. 
\smallskip%

Let \(x_1,\ldots,x_m\) be an arbitrary set of points in \(\mathbb{R}^d\). For each \(x_i \in [0,1]^d\), consider its transformed version:
\[
\hat{x}_i(k) = U_i F_k \big(x_i(k)\big) + (1 - U_i) F_k \big(x_i(k)^-\big),
\]
where \(x_i(k)\) denotes the \(k\)th coordinate of \(x_i\), and similarly 
for \(\hat{x}_i\). We use its own independent copy \(U_i\) for each point. This preserves independence when the \(x_i\) are sampled independently from the original distribution. 
\begin{lemma}\label{u_transform}
If each \(F_k\) is strictly increasing, then the sign discrepancy of \(\hat{x}_1, \ldots, \hat{x}_m\) is no less than that of \(x_1, \ldots, x_m\).
\end{lemma}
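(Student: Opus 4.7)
The plan is to define, for every axis-aligned half-infinite box $B = \prod_{k=1}^{d}(-\infty, b_k]$ in $\R^d$, a companion box $\hat B \coloneqq \prod_{k=1}^{d}(-\infty, F_k(b_k)]$, and to verify the pointwise identity $\one_{\{x_i \in B\}} = \one_{\{\hat x_i \in \hat B\}}$ for every $i = 1, \ldots, m$. Once this is established, the sums defining $\disc_k(B)$ in \eqref{eq: sign discrepancy} agree term-by-term for the two samples (the signs $\e_i$ are unchanged by the transform), so $\disc_k(B)$ computed from $x_1, \ldots, x_m$ equals $\disc_k(\hat B)$ computed from $\hat x_1, \ldots, \hat x_m$. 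Since the map $B \mapsto \hat B$ takes axis-aligned half-infinite boxes to axis-aligned half-infinite boxes, the supremum over all such boxes for the transformed sample dominates the supremum for the original sample, and the claim follows.

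The substance reduces to the one-dimensional statement: for each coordinate $k$, we have $x_i(k) \le b_k$ if and only if $\hat x_i(k) \le F_k(b_k)$. I would verify this by a three-case analysis. If $x_i(k) < b_k$, monotonicity of $F_k$ gives $F_k(x_i(k)^-) \le F_k(x_i(k)) \le F_k(b_k^-) \le F_k(b_k)$, so the convex combination $\hat x_i(k) = U_i F_k(x_i(k)) + (1 - U_i) F_k(x_i(k)^-)$ is at most $F_k(b_k)$. If $x_i(k) = b_k$, the combination lies in $[F_k(b_k^-), F_k(b_k)]$ and is in particular at most $F_k(b_k)$. If $x_i(k) > b_k$, pick any $y$ with $b_k < y < x_i(k)$; strict monotonicity of $F_k$ forces $F_k(y) > F_k(b_k)$, and monotonicity gives $F_k(x_i(k)^-) \ge F_k(y)$ and $F_k(x_i(k)) \ge F_k(y)$, so both terms of the convex combination strictly exceed $F_k(b_k)$, whence $\hat x_i(k) > F_k(b_k)$.

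The key subtlety is concentrated in the third case: strict monotonicity of $F_k$ is indispensable there, because otherwise an atom or plateau of $F_k$ sitting inside $(b_k, x_i(k))$ could force $F_k(x_i(k)^-) = F_k(b_k)$ and permit $\hat x_i(k) = F_k(b_k)$ despite $x_i(k) > b_k$, destroying the box-membership equivalence. This is precisely the motivation for the Gaussian-perturbation reduction at the start of \S\,\ref{reduction}, which secures the strict-monotonicity hypothesis and thereby makes the three-case analysis above go through.
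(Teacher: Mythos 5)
Your proof is correct and follows the same route as the paper: associate to each half-infinite box $B = \prod_k (-\infty, b_k]$ the transformed box $\hat B = \prod_k (-\infty, F_k(b_k)]$, establish the pointwise membership equivalence $x_i \in B \iff \hat x_i \in \hat B$, and conclude that the supremum of $\disc_k$ over transformed boxes dominates the supremum over original boxes.

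Your treatment of the third case is in fact more careful than the paper's. The paper writes ``$\hat x_i(k) \ge F_k(x_i(k)) > F_k(b_k)$'', but $\hat x_i(k)$ is a convex combination of $F_k(x_i(k))$ and $F_k(x_i(k)^-) \le F_k(x_i(k))$, so the correct lower bound is $\hat x_i(k) \ge F_k(x_i(k)^-)$, and one still needs $F_k(x_i(k)^-) > F_k(b_k)$. You supply exactly the missing step: pick $y \in (b_k, x_i(k))$, get $F_k(x_i(k)^-) \ge F_k(y) > F_k(b_k)$ from monotonicity and strict monotonicity. Your closing remark correctly pinpoints that strict monotonicity is load-bearing precisely here, which is the reason for the Gaussian-perturbation reduction earlier in \S\,\ref{reduction}.
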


\begin{proof}
To each axis-aligned half-infinite box
$B = (-\infty, b_1] \times \cdots \times (-\infty, b_d]$, associate the anchored box $\hat{B} = [0, F_1(b_1)] \times \cdots \times [0, F_d(b_d)]$. It suffices to show that for each \(i\),
\[
x_i \in B \iff \hat{x}_i \in \hat{B}.
\]
Indeed, if \(x_i \in B\), then \(\hat{x}_i \in \hat{B}\), regardless of \(U_i\). If \(x_i \notin B\), then \(x_i(k) > b_k\) 
for some \(k\). Hence, there is 
\(y\) with \(b_k < y < x_i(k)\). Then
$$
F_k(b_k) 
< F_k(y)
\le F_k(x_i(k)^{-})
\le \hat{x}_i(k),
$$
where the first two inequalities follow by the strict monotonicity of $F_k$, and the last bound follows from the definition of $\hat{x}_i(k)$ if one replaces $F_k(x_i(k))$ by the smaller value $F_k(x_i(k)^-)$ there. Thus, $\hat{x}_i \not\in \hat{B}$. The proof is complete. \qed
\end{proof}
\smallskip%

We conclude with the following statement.

\begin{lemma}
If Proposition~\ref{stream} holds for all Borel distributions with uniform marginals, then it holds for all Borel distributions.
\end{lemma}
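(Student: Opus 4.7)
The plan is to combine the two preparatory ingredients already assembled in this section: the Gaussian mixture perturbation, which produces strictly increasing marginals, and the randomized probability integral transform together with Lemma~\ref{u_transform}, which reduces strictly-increasing marginals to uniform marginals. Stage one handles the strictly monotone case; stage two removes that hypothesis.

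For stage one, I assume $\mu$ has strictly increasing marginals $F_1,\ldots,F_d$, and build the algorithm on the $\mu$-stream $x_1,\ldots,x_m$ as follows. At step $i$, draw an independent $U_i \sim \mathrm{Unif}[0,1]$, form the randomized integral transform $\hat{x}_i(k) = U_i F_k(x_i(k)) + (1-U_i) F_k(x_i(k)^-)$, feed $\hat{x}_i$ to the algorithm $\mathcal{U}$ guaranteed by the uniform-marginals hypothesis, and discard $x_i$ precisely when $\mathcal{U}$ discards $\hat{x}_i$. The $\hat{x}_i$ are i.i.d.\ with uniform marginals on $[0,1]^d$ (a general property of the randomized probability integral transform), so $\mathcal{U}$ meets its guarantee on them. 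The pointwise equivalence $x_i \in B \iff \hat{x}_i \in \hat{B}$ established in the proof of Lemma~\ref{u_transform} depends only on the individual pair $(x_i, U_i)$, so it survives restriction to any sub-stream and yields the box-by-box identity $\disc_k^{(x)}(B) = \disc_k^{(\hat x)}(\hat B)$. Taking suprema then gives the same expected-discrepancy bound for the $\mu$-sample, while the discard count is preserved verbatim.

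For stage two, given an arbitrary Borel $\mu$, fix a small $\delta > 0$ and let $\mu_\theta$ with $\theta = 1-\delta$ be its Gaussian mixture as in the earlier perturbation paragraph. Its marginals are strictly increasing, so stage one produces an algorithm $\mathcal{A}_\theta$ for $\mu_\theta$-samples achieving the bound of Proposition~\ref{stream}. I would transfer to $\mu$-samples by coupling at the source: for each index $i$, draw independent $B_i \sim \mathrm{Bernoulli}(\theta)$ and a fresh Gaussian $y_i$, and feed the synthetic point $z_i = x_i$ (if $B_i = 1$) or $z_i = y_i$ (if $B_i = 0$) into $\mathcal{A}_\theta$. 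By construction the $z_i$ are i.i.d.\ $\mu_\theta$, so $\mathcal{A}_\theta$ meets its bounds on the $z$-stream. The algorithm I output for $\mu$ then keeps $x_i$ iff $B_i = 1$ and $\mathcal{A}_\theta$ kept $z_i$, and discards $x_i$ otherwise.

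The main obstacle is absorbing the coupling overhead into the target bound without losing constants. The running $x$-sign-discrepancy over kept indices differs from the $z$-sign-discrepancy by a Rademacher sum over at most $|\{i : B_i = 0\}|$ Gaussian injections, whose supremum over half-infinite boxes has expectation $O_d(\sqrt{\delta n})$ by classical VC theory; the discard count picks up at most $\delta m \le 2 \delta n$ extra from the Bernoulli flips. Both overheads vanish as $\delta \downarrow 0$ while the stage-one bounds are uniform in $\theta$, so choosing $\delta$ small enough to fit inside the additive slack present in the absolute constant $C_1 d$ (on the discrepancy side) and inside the margin obtained by running stage one at a slightly tightened internal parameter (on the discard side) delivers Proposition~\ref{stream} for $\mu$.
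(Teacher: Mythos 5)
Your proposal follows the same two-stage route as the paper: the randomized integral transform plus Lemma~\ref{u_transform} to pass from strictly increasing marginals to uniform marginals, and the Gaussian-mixture coupling to pass from arbitrary marginals to strictly increasing ones. Stage one is correct and matches the paper. In stage two you inject synthetic Gaussian points to build a $\mu_\theta$-stream and then discard the injected indices; the paper instead runs the $\mu_\theta$-algorithm directly on the $\mu$-sample and uses the coupling event $\{x=z\}$, which occurs with probability $\theta^m$. Both implementations are sound, but yours requires a separate bound on the contribution of the injections, and that is where the gap lies.

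The flaw is the claim that the Rademacher sum over the \emph{kept} Gaussian injections has expected supremum $O_d(\sqrt{\delta n})$ by VC theory. The kept injections are not an i.i.d.\ subsample: the keep/discard decision of $\mathcal{A}_\theta$ at time $i$ depends on the point $z_i$ and on the sign $\e_i$ itself (in Proposition~\ref{l1}, a term is kept precisely when $w_{i-1}+\e_i v_i$ stays in the cube), so the selection is correlated with the very Rademacher signs appearing in the sum. An adaptively chosen subset of $O(\delta n)$ unit terms can have signed sum of order $\delta n$, not $\sqrt{\delta n}$, so VC does not apply as stated. Fortunately, you do not need the square-root saving: the crude deterministic bound $|D| \leq \#\{i : B_i=0\}$, whose expectation is $\delta m$, is enough, and it still vanishes as $\delta \downarrow 0$. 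With that replacement the rest of your absorption argument (tightening the internal parameter on the discard side and using the slack in $C_1 d$ on the discrepancy side) goes through, and the proof matches the paper's in substance.
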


\begin{proof}
Using the perturbation argument above, we reduce to the case where all marginals have strictly increasing distribution functions. Apply the randomized transform to obtain \(\hat{x}_1, \ldots, \hat{x}_m\) from \(x_1, \ldots, x_m\), using independent \(U_i\) to preserve independence.
\smallskip%

Let us discard some of the transformed points 
\(\hat{x}_i\) to reduce discrepancy. Then discard the corresponding indices from the original sample \(x_i\). By Lemma~\ref{u_transform}, which does not require independence, the discrepancy of the retained \(x_i\) is no greater than that of the retained \(\hat{x}_i\). This completes the proof. \qed
\end{proof}

\section{Proof of Proposition \ref{stream}}\label{proof}
Our proof relies on the following statement, which may be of independent interest.

\begin{proposition}\label{l1}
    Consider any vectors $v_1,\ldots,v_m \in \R^{N}$ and a number $T>0$. Let $\e_1,\ldots,\e_m$ be independent Rademacher random variables. There is an online algorithm to discard terms $\e_i v_i$ so that the running sums of the remaining terms $\e'_i v'_i$ satisfy 
    $$
    \sup_k \norm[\Big]{\sum_{i=1}^k \e'_i v'_i}_\infty
    \le T \quad \text{deterministically,}
    $$
    and
    $$
    \E\#\left\{
    \text{\normalfont{discarded terms}}
    \right\}
    \le \frac{1}{T} \sum_{i=1}^{m} \norm{v_i}_1.
    $$
\end{proposition}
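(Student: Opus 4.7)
The plan is to implement the greedy online rule: when the term $\e_k v_k$ arrives, retain it iff $\|S_{k-1}+\e_k v_k\|_\infty\le T$, where $S_{k-1}$ is the partial sum of previously retained terms (and $S_0=0$). The deterministic bound $\|S_k\|_\infty\le T$ then holds by construction, so the whole task reduces to controlling the expected number of discards. The edge case $\|v_k\|_\infty>T$ (in which both signs of $\e_k$ are infeasible and a discard is forced) contributes at most $1$ per such step, and is absorbed in the budget since $\|v_k\|_1/T\ge\|v_k\|_\infty/T>1$; so the interesting regime is $\|v_k\|_\infty\le T$.

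To bound the discard count, I would run a potential-function argument with $\Phi_k\coloneqq\|S_k\|_2^2\in[0,\dim\cdot T^2]$. Telescoping $\E[\Phi_m]=\sum_k\E[\Phi_k-\Phi_{k-1}]$ and conditioning on the filtration $\mathcal F_{k-1}$, the expected per-step increment splits into two cases. When both choices $\e_k=\pm 1$ lead to feasible states, the $\e_k$-expectation is exactly $\|v_k\|_2^2$. When exactly one sign is feasible, the $\e_k$-expectation acquires a cross term $-\langle v_k,S_{k-1}\rangle$ (with appropriate sign). The key structural observation is that on a discard event some coordinate $j^\star$ satisfies $|S_{k-1}(j^\star)|>T-|v_k(j^\star)|$, which biases $\langle v_k,S_{k-1}\rangle$ in the direction producing a negative drift in $\Phi$ of order $|v_k(j^\star)|\cdot(T-|v_k(j^\star)|)$; this is the mechanism by which each discard ``pays'' an amount proportional to $T\,|v_k(j^\star)|$ against the potential budget.

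The main obstacle I anticipate is matching the right-hand side $T^{-1}\sum_k\|v_k\|_1$ in the $\ell_1$ norm, rather than the $\ell_2^2$ norm that a quadratic potential naturally produces. My plan is a per-coordinate decomposition: assign each discard to a single offending coordinate $j^\star$ that triggered the $\ell_\infty$-violation, then bound the discards charged to each coordinate $j$ by running the one-dimensional version of the argument on the scalar process $S_k(j)$. The subtle point is that $S_k(j)$ from the $\dim$-dimensional greedy is \emph{not} the trajectory of a pure 1D greedy on coordinate $j$, since discards triggered by other coordinates also freeze $S_k(j)$; however, extra freezes only make it harder for $|S_k(j)|$ to approach the boundary $T$, so a monotone coupling argument should show that the number of coordinate-$j$ discards in the $\dim$-dimensional algorithm is stochastically dominated by its pure 1D analogue. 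Summing over $j$ would then yield the desired bound. As a backup, if the coupling proves technical, I would fall back on a direct level-crossing estimate---showing that, on average over the law of $S_{k-1}$, $\mathbb P\bigl(|S_{k-1}(j)|>T-|v_k(j)|\bigr)\lesssim |v_k(j)|/T$, a discrete Skorokhod-type bound for the reflected walk in each coordinate---which immediately yields the discard count upon summing over $k$ and $j$.
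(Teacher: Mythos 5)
Your plan uses the deterministic greedy rule starting from $S_0=0$, which creates exactly the analytic difficulties you then spend the rest of the proposal fighting. The paper sidesteps all of them with one move you do not consider: it \emph{randomizes the initial state}. Specifically, set $K=[-T/2,T/2]^{\dim}$, draw $w_0$ uniformly in $K$, and accept $\e_k v_k$ iff $w_{k-1}+\e_k v_k\in K$. Because the step distribution is symmetric (Rademacher), the resulting Markov chain on $K$ is self-adjoint, so the uniform distribution on $K$ is stationary; with $w_0$ uniform, every $w_{k-1}$ is exactly uniform on $K$ and independent of $\e_k$. The discard probability at step $k$ is then computed in one line (Lemma~2 of the paper): $\Pr\{\|w_{k-1}+\e_k v_k\|_\infty>T/2\}\le\|v_k\|_1/T$, and summing over $k$ gives the claim. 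The running-sum bound follows from $\|w_k-w_0\|_\infty\le T$.

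The obstacles you anticipate are real and I do not think your backup plan closes them. Your level-crossing estimate $\Pr\{|S_{k-1}(j)|>T-|v_k(j)|\}\lesssim|v_k(j)|/T$ is precisely the inequality that becomes an equality-up-to-constants under the uniform distribution; for the deterministic start at $0$ it ought to hold by a ``more peaked than uniform'' stochastic-dominance argument, but you would have to prove that dominance, and the coordinate process $S_k(j)$ is not a clean one-dimensional walk: it freezes at times dictated by the \emph{other} coordinates, and those freezing events are correlated with $S_{k-1}(j)$ through the shared $\e_k$ and the joint law of $S_{k-1}$. That is exactly the ``subtle point'' you flag, and you do not give an argument for the monotone coupling beyond intuition. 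Separately, the $\ell_2^2$ potential $\Phi_k=\|S_k\|_2^2$ naturally pays out in units of $\sum_k\|v_k\|_2^2/T^2$ rather than $\sum_k\|v_k\|_1/T$; the per-coordinate credit you compute at a discard is of order $|v_k(j^\star)|\,(T-|v_k(j^\star)|)$, which degrades when $|v_k(j^\star)|$ is close to $T$, and the cross terms from the non-offending coordinates need not help. None of this is obviously fatal, but it is substantial unfinished work, whereas the paper's randomized initialization turns the whole proposition into a two-step argument: stationarity of the uniform law, then a union bound over coordinates.
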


The proof will need an elementary lemma.

\begin{lemma}   \label{lem: outside cube}
    
    Fix any vector $v \in \R^{N}$ and a number $T>0$. Let $w$ be a random vector uniformly distributed in the cube 
    $[-T/2,T/2]^{N}$. Then
    $$
    \Prob[\big]{\norm{w+v}_\infty > T/2}
    \le \frac{\norm{v}_1}{T}.
    $$
\end{lemma}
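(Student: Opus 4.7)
The plan is to reduce the multi-dimensional tail estimate to a one-dimensional computation and then finish by the union bound. The event $\{\|w+v\|_\infty>T/2\}$ is, by definition, the union over coordinates $k$ of the events $\{|w_k+v_k|>T/2\}$, so it is enough to control each coordinate event separately and sum.

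First I would fix a coordinate $k$. The marginal $w_k$ is uniform on $[-T/2,T/2]$, so $w_k+v_k$ is uniform on the translated interval $[v_k-T/2,\,v_k+T/2]$ of length $T$. A direct inspection shows that the portion of this interval lying outside $[-T/2,T/2]$ has length at most $|v_k|$, hence
\[
\Pr\bigl(|w_k+v_k|>T/2\bigr)\;\le\;\frac{|v_k|}{T}.
\]
(If $|v_k|>T$ the right-hand side already exceeds $1$ and the bound is vacuous.)

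Next I would apply the union bound over the coordinates to conclude
\[
\Pr\bigl(\|w+v\|_\infty>T/2\bigr)\;\le\;\sum_{k}\frac{|v_k|}{T}\;=\;\frac{\|v\|_1}{T},
\]
which is precisely the claim.

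There is no substantive obstacle: the argument is little more than a one-dimensional picture combined with a union bound. The only point to be mindful of is that no property of $w$ beyond the individual marginal distributions is invoked — independence of the coordinates of $w$ is not even needed, only that each $w_k$ is uniform on $[-T/2,T/2]$.
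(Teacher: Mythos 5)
Your argument is correct and follows essentially the same route as the paper: a union bound over the coordinates followed by the one-dimensional observation that a shifted uniform on an interval of length $T$ exits $[-T/2,T/2]$ with probability at most $|v_k|/T$. The only (minor) difference is that you phrase the one-dimensional step as an inequality covering the case $|v_k|>T$, whereas the paper writes it as an equality after symmetrizing; both are fine.
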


\begin{proof}
    By union bound, we have
    \begin{equation} \label{eq: infty norm union bound}
        \Prob[\big]{\norm{w+v}_\infty > T/2}
    \le \sum_{i=1}^{{N}} \Prob[\big]{\abs{w_i+v_i} > T/2}.
    \end{equation}
    For each $i$, since $w_i$ is uniformly distributed in $[-T/2,T/2]$ and $v_i$ is fixed, we have
    $$
    \Prob[\big]{\abs{w_i+v_i} > T/2}
    = \min\Big(\frac{\abs{v_i}}{T}, 1\Big)
    \le \frac{\abs{v_i}}{T}.
    $$
    Substitute this into \eqref{eq: infty norm union bound} to complete the proof. \qed
\end{proof}
\medskip%

\begin{proof}[Proof of Proposition \ref{l1}]
Let $K \coloneqq [-T/2,T/2]^{N}$. We arrange the discards as follows:
    \begin{itemize}
        \item Sample $v_0 \in K$ uniformly at random, and set $w_0 = v_0$.
\smallskip%

        \item Upon receiving $\e_k v_k$, check whether
        $$
        w_{k-1} + \e_k v_k 
        \in K.
        $$
        If so, accept the term $\e_k v_k$ and update $w_k \gets w_{k-1} + \varepsilon_k v_k$; otherwise, discard the term and set $w_k = w_{k-1}$.
\smallskip%

        \item Repeat for $\e_{k+1} v_{k+1}$.
    \end{itemize}
    This algorithm outputs a thinned subsequence of the original stream. By design, all $w_k$ remain in $K$ at all times $k=0,1,2,\ldots$, so all running sums of the accepted terms satisfy $\norm{w_k-w_0}_\infty \le \norm{w_k}_\infty + \norm{w_0}_\infty \le T/2 + T/2 \le T$, as claimed.
\smallskip%

    The random walk \(w_0, \ldots, w_{m}\) is a Markov chain on $K$. Due to the symmetry of the Rademacher distribution, the Markov chain is symmetric:
    \[
    \Prob*{w_{k} = w' \given w_{k-1} = w}
    =
    \Prob*{w_{k} = w \given w_{k-1} = w'}
    \quad \text{for each } w, w' \in K.
    \]
    Consequently, the uniform distribution on \(K\) is stationary. Since \(w_0\) is initialized uniformly, each \(w_k\) remains uniformly distributed on \(K\). By design, the term $\e_k v_k$ is discarded if and only if $\norm{w_{k-1} + \e_k v_k}_\infty > T/2$. 
    Thus:
    $$
    \E\#\left\{\text{discarded terms}\right\}
    = \sum_{k=1}^{m} \Prob[\big]{\norm{w_{k-1} + \e_k v_k}_\infty > T/2}
    \le \frac{1}{T} \sum_{k=1}^{m} \norm{v_k}_1,
    $$
    where the last bound follows from Lemma \ref{lem: outside cube}, since $w_{k-1}$ is uniformly distributed in $K$ and is independent of $\e_k$. The proof is complete. \qed
\end{proof}

\begin{proof}[Proof of Proposition \ref{stream}] 
We may assume that the  distribution is supported on $[0,1]^d$ and has uniform marginals -- see \S\,\ref{reduction}. The plan is to first bound the discrepancy over dyadic boxes using Proposition \ref{l1}, then extend the bound to lattice boxes via a union bound, and finally generalize it to all boxes through approximation. 
\smallskip%

Fix a resolution parameter $L \ge 1$; we will later choose it as $L = \log_2 n$ but it is simpler to keep it general for now. 

\medskip

{\em Step 1. Dyadic boxes.}
A dyadic interval is an interval of the type
$$
\left[ \frac{j}{2^\ell}, \frac{j+1}{2^\ell} \right]
\quad \text{where} \quad 
j \in \{0,1,\ldots,2^\ell-1\}
\text{ and } \ell \in \{0,1,\ldots,L-1\}.
$$
A dyadic box is the product of $d$ dyadic intervals, possibly with different $j$ and $\ell$ in different factors. To each point $x_i \in [0,1]^d$ of our sample, associate the vector that encodes which dyadic boxes the point belongs:
$$
v_i 
\coloneqq \left( \one_{\{x_i \in B\}} \right)_{B: \text{ dyadic box}} \in \R^{N},\quad N = (2^{L} - 1)^d.
$$
This vector is indexed by all dyadic boxes $B$, with each coordinate equal to $1$ if $x_i$ lies in $B$ and $0$ otherwise. It follows from the uniform marginal assumption that a.s. no \(x_i\) lies on the boundary of a dyadic box. Then: 
$$
\norm{v_i}_1
= \# \left\{ \text{dyadic boxes $B$ that contain $x_i$} \right\}
= L^d.
$$
Apply Proposition \ref{l1} with $TL^d$ instead of $T$. With 
$$
\E\#\left\{
\text{discarded terms}
\right\}
\le \frac{1}{TL^d} \sum_{i=1}^{m} \norm{v_i}_1
\le \frac{mL^d}{TL^d}
\le \frac{m}{T},
$$
as required, the remaining terms $\e'_i v'_i$ satisfy 
$$
\sup_k \norm[\Big]{\sum_{i=1}^k \e'_i v'_i}_\infty
\le TL^d \quad \text{deterministically}.
$$
By definition of $v_i$ above and sign discrepancy in \eqref{eq: sign discrepancy}, this means that the remaining points of the sample satisfy 
\begin{equation}    \label{eq: discrepancy dyadic}
    \disc_k(B) \le TL^d
    \quad \text{for each dyadic box $B$ and each $k$.}   
\end{equation}

\medskip

{\em Step 2. Lattice boxes.}
A lattice interval is an interval of the type
$$
\left[ 0, \frac{j+1}{2^{L-1}} \right]
\quad \text{where} \quad 
j \in \{0,1,\ldots,2^{L-1}-1\}.
$$
A lattice box is the product of $d$ lattice intervals, possibly with different $j$ in different factors.
\smallskip%

Any lattice interval can be partitioned into at most $L$ dyadic intervals. Thus, any lattice box can be partitioned into at most $L^d$ dyadic boxes. Therefore, summing $L^d$ bounds \eqref{eq: discrepancy dyadic} by triangle inequality, we conclude: 
\begin{equation}    \label{eq: discrepancy lattice}
    \disc_k(B) \le TL^{2d}
    \quad \text{for each lattice box $B$ and each $k$.}   
\end{equation}

\medskip

{\em Step 3. Slices.} We are about to approximate a general box by a dyadic box, with the approximation error controlled by the number of points that can fall into a thin slice. To prepare for this, let's bound that number.
\smallskip%

A slice is the product of one interval of the type  
\begin{equation}    \label{eq: smallest dyadic interval}
    \left[ \frac{j}{2^{L-1}}, \frac{j+1}{2^{L-1}} \right],
    \quad \text{where} \quad j \in \{0,1,\ldots,2^{L-1}-1\},   
\end{equation}
and $d-1$ full intervals $[0,1]$; the interval can appear as any of the $d$ factors.
\smallskip%

By the assumption made at the beginning of the proof, the distribution of the points $x_i$ has uniform marginals in $[0,1]$. Thus, for any slice $S$, the number of points falling in $S$, 
$$
Z_S \coloneqq \# \left\{ i \in [m]: x_i \in S \right\},
$$
has binomial distribution with parameters 
$m$ and $2^{-(L-1)}$. Since increasing \( m \) can only increase the number of points in slices, we assume the worst case \( m = 2n \). Applying Chernoff's inequality \cite[Theorem 2.3.1]{vershynin2018high}, we get for each $u > 0$,
\begin{equation}\label{chernoff}
\Prob*{Z_S > 16 n 2^{-(L-1)} + \ln(d2^{L-1}) + u}
\le e^{-\ln(d2^{L-1}) - u}
= \frac{e^{-u}}{d2^{L-1}}.
\end{equation}
See \S\,\ref{chernoff_section} for a direct 
proof. Taking the union bound over all 
$d2^{L-1}$ possible slices, we get 
$$
\Prob*{\max_S Z_S > 
16 n 2^{-(L-1)}+\ln(d2^{L-1}) + u}
\le e^{-u}.
$$
In other words, with probability at least $1-e^{-u}$, the following event holds: 
\begin{equation}\label{eq: slices}
    \max_{S: \text{ slice}} \# \left\{ i \in [2n]: x_i \in S \right\} 
    \le 16 n 2^{-(L-1)}+\ln(d2^{L-1})+u.
\end{equation}

{\em Step 4. General boxes.} Any interval $B = [0,b] \subset [0,1]$ can be approximated by some lattice interval $B' \supset B$ by rounding $b$ up to a lattice point; this way $B' \setminus B$ lies in some dyadic interval of the type \eqref{eq: smallest dyadic interval}. Therefore, a general box
$$
B = [0,b_1] \times \cdots \times [0,b_d]
$$
can be approximated by a lattice box $B' \supset B$ so that $B' \setminus B$ lies in a union of $d$ slices. By triangle inequality, it follows that
$$
\disc_k(B)
\le \disc_k(B') + d \cdot \max_{S: \text{ slice}} \# \left\{ i \in [2n]: x_i \in S \right\}.
$$
Taking supremum over axis-aligned boxes $B$ and $k$, and using \eqref{eq: discrepancy lattice}, we find:
\[
\sup_{B: \text{ box}, \; k} \disc_k(B)
\le TL^{2d} + d \cdot \max_{S: \text{ slice}} \# \left\{ i \in [2n]: x_i \in S \right\}.
\]
Set \(L = \log_{2} n\). From \eqref{eq: slices}, 
with probability \(1-e^{-u}\), we find:
\[
\sup_{B: \text{ box}, \; k} \disc_k(B) \leq 
T \log_{2}^{2d} n + d\ln(d n) + 
d(u + 32 - \ln{2}).
\]
Integrating the tail (see \cite[Lemma 1.2.1]{vershynin2018high}) completes the proof. \qed
\end{proof}

\section{On vector balancing}\label{balancing}
Let \(v_1, \ldots, v_m \in 
\mathbb R^{N}\). The (algorithmic) vector 
balancing problem asks for an algorithm 
to choose signs \(\varepsilon_1, \ldots, \varepsilon_m \in \{\pm 1\}\) 
so as to make the (sign) discrepancy
\[
\sup_k \left\| \sum_{i = 1}^{k} \varepsilon_i v_i \right\|_{\infty}
\]
small. It is perhaps interesting that Proposition 
\ref{l1} offers such an algorithm, albeit 
with a suboptimal bound. To begin with, 
let us recall the following simple bound.
\begin{lemma}\label{max-geometric-like}
Let $Y_1,\ldots,Y_m$ be nonnegative, 
integer-valued random variables and set
\[
\tau = \sup_{i} Y_i.
\]
Suppose that there exist numbers \(p_i \in [0,1/2]\) such that 
for all \(t \ge 1\) and \(i\),
\[
\Prob{Y_i > t} \leq p_i^t.
\]
Then
\[
\mathbb E \tau \leq 3 + \log_2(1 + S_1),\quad 
S_1 = \sum_{i} p_i.
\]
\end{lemma}
\begin{proof}
For each $t \geq 1$,
\[
\Prob{\tau > t}
= \Prob{\exists\, Y_i > t} \leq \sum_{i} \Prob{Y_i > t} \leq 
\sum_{i} p_i^{t}
\le  \sum_{i} 2^{-(t-1)} p_i
= 2^{-(t-1)} S_1.
\]
Then:
\[
\mathbb E \tau
= \sum_{t \geq 0} \Prob{\tau > t} \leq 1 + 
\sum_{t \geq 1} \min \left\{1, 2^{-(t-1)} S_1 \right\}.
\]
Set:
\[
t_0 = 1 + \log_2 S_1.
\]
If \(t_0 < 1\), then:
\[
\sum_{t \ge 1} \min\{1, 2^{-(t-1)} S_1\}
\le \sum_{t \ge 1} 2^{-(t-1)} S_1 \le 2 S_1 \le 2.
\]
If \(t_0 \geq 1\), then:
\[
\sum_{t \geq 1} 
\min\left\{ 1, 2^{-(t-1)} S_1 \right\} = 
\sum_{t \leq t_0} + \sum_{t > t_0} = t_0 + 
\sum_{t > t_0} 2^{-(t-1)} S_1 \leq 
t_0 + 2^{-(t_0 - 1)} S_1 = t_0 + 1 \leq 2 + \log_2 S_1.
\]
In both cases, 
\[
\mathbb E \tau \leq 3 + \log_2 (1 + S_1), 
\]
and the proof follows. \qed
\end{proof}
\smallskip%

\begin{proposition}\label{balancing_l1}
Consider any vectors $v_1,\ldots,v_m \in \R^{N}$. There is an online algorithm 
to choose \(\varepsilon_1, \ldots, \varepsilon_m = \pm 1\) so that the running sums satisfy
\begin{equation}\label{balancing_bound}
\mathbb E \sup_k \left\| \sum_{i = 1}^{k} \varepsilon_i v_i \right\|_{\infty} 
\leq 2 \sup_i \|v_i\|_1 \left[ 
3 + \log_2 \left(
1 + \frac{\sum_i \|v_i\|_1}{2 \sup_i \|v_i\|_1}
\right)
\right]
= O\bigl(\sup_i \|v_i\|_1 \log m\bigr).
\end{equation}
\end{proposition}
\smallskip%

The bound in Proposition~\ref{l1} is 
dimension free (independent of $N$) and extends to countable families $v_i$ with $\sum_i \|v_i\|_1 < \infty$. However, it is in terms of the $\ell^1$-norms of the vectors, which is weaker than the $\ell^2$-norm featured in~\cite{AL-Saw}.

\begin{proof}
To begin with, let us 
describe a non-streaming algorithm and then explain how to make it online. 
\smallskip%

Recall the discarding 
algorithm of Proposition \ref{l1}. 
It receives vectors \(v_1, \ldots, v_m\), samples a random \(v_0 \in K = [-T/2, T/2]^{N}\), and produces 
a thinned sequence \(v'_i, \varepsilon'_i\) with 
\[
\sup_k \left\| \sum_{i = 1}^{k} \varepsilon'_i v'_i \right\|_{\infty} \leq T.
\]
For each \(v_i\), the probability to be 
discarded (i.e., to receive \(\varepsilon_i = 0\)) 
is at most \(\|v_i\|_{1}/T\). 
\smallskip%

Set
\[
T = 2 \sup_i \| v_i \|_1,
\]
so that \(\|v_i\|_1/T \le 1/2\) for all \(i\).
\smallskip%

Let us now choose \(\varepsilon_i = \pm 1\) as 
follows:
\begin{itemize}
\item Sample \(v_0 \in K\) uniformly and apply the discarding algorithm to 
\(v_0, v_1, \ldots, v_m\). For each \(i\), 
if \(\varepsilon_i \neq 0\), keep this sign for 
\(v_i\). 
\smallskip%
\item If all \(\varepsilon_i \neq 0\), 
stop. Otherwise, let 
the vectors \(v_i\) with \(\varepsilon_i = 0\) 
form a new sequence and repeat the previous step, using fresh randomness.
\end{itemize}
\smallskip%

Let \(\tau\) be the number of iterations. Since 
\(\|v_i\|_1/T \le 1/2\) for all \(i\), 
\(\tau\) is almost surely finite. By triangle inequality,
\[
\sup_k \left\| \sum_{i = 1}^{k} 
\varepsilon_i v_i \right\|_{\infty} \leq \tau T.
\]
To bound \(\tau\), note that in each 
iteration, \(v_i\) is discarded with probability at most
\[
p_i = \frac{\|v_i\|_1}{T} \le \frac{1}{2}.
\]
We can write $\tau = \sup_i Y_i$ where $Y_i$ denotes the number of rounds in which $v_i$ is discarded. By independence, we have $\Prob{Y_i > t} \leq p_i^t$ for all $t \ge 1$, so Lemma~\ref{max-geometric-like} gives
\[
\mathbb E \tau \leq 3 + \log_2 (1 + S_1) 
\quad \text{where} \quad 
S_1 = \sum_i \frac{\|v_i\|_{1}}{T},
\]
and \eqref{balancing_bound} follows. 
\smallskip%

Finally, the construction can be 
implemented online. 
Instead of running the iterations 
sequentially, we run them in parallel. 
For each iteration \(\tau = 1,2,\ldots\), 
fix an independent instance 
of the discarding algorithm 
of Proposition~\ref{l1} with its own random seed 
and initial vector \(v_0\). When \(v_i\) arrives, we feed it 
first to round \(1\); if that round would discard it, we feed it to 
round \(2\), and so on, until some round accepts it and assigns a sign 
\(\varepsilon_i\). At that point we stop and never reconsider \(v_i\). 
Each decision for \(v_i\) depends only on \(v_1,\dots,v_{i}\), 
so the resulting algorithm is online. This completes the proof. \qed
\end{proof}

\section{Proof of \eqref{chernoff}}\label{chernoff_section}
Write $Z_S = Y_1 + \cdots + Y_m$, 
where $m \le 2n$ and $Y_i$ are i.i.d. with
\[
\mathbb P\{Y_i = 1\} = p = 2^{-(L-1)}, \quad \mathbb P\{Y_i = 0\} = 1-p.
\]
For each \(t > 0\) and \(a\),
\[
\mathbb P\{Z_S \ge a\}
= \mathbb P\{e^{t Z_S} \ge e^{t a}\}
\le e^{-t a} \,\E e^{t Z_S}
= e^{-t a} (1-p+pe^t)^m
\le \exp\big(-t a + m p (e^t - 1)\big).
\]
Set:
\[
a = 16 n p +\ln(d2^{L-1}) + u, \quad t = 1.
\]
Since \(m \leq 2n\), it follows that:
\[
-t a + m p (e^t - 1)
\le -a + 2np(e-1)
= 2np(e-9) - \ln(d2^{L-1}) - u
\le -\ln(d2^{L-1}) - u,
\]
Consequently,
\[
\mathbb P\{Z_S \ge 16 n p + \ln(d2^{L-1}) + u\}
\le \exp\big(-\ln(d2^{L-1}) - u\big)
= \frac{e^{-u}}{d2^{L-1}},
\]
and \eqref{chernoff} follows.

\bibliographystyle{plain}
\bibliography{ref}

\end{document}